\def\FF{{\mathbb{F}}}
\def\R{{\mathbb{R}}}
\def\ZZ{{\mathbb{Z}}}
\def\Cl{{\mathrm{Cl}}}
\def\Coker{{\mathrm{Coker}}}
\def\Der{{\mathrm{Der}}}
\def\Hom{{\mathrm{Hom}}}
\def\Proj{{\mathrm{Proj\; }}}
\def\Spec{{\mathrm{Spec\; }}}
\theoremstyle{plain}
\newtheorem{thm}{Theorem}[section]
\newtheorem{lem}[thm]{Lemma}
\theoremstyle{definition}
\title{Quotients of smooth projective toric varieties by $\mu_p$ in positive characteristics $p$}
\author{Tadakazu Sawada}
\address{Department of General Education, National Institute of Technology, Fukushima College, 
30 Aza-Nagao, Kamiarakawa, Iwaki-shi, Fukushima 970-8034, Japan}
\email{sawada@fukushima-nct.ac.jp}
\subjclass[2010]{14M25, 14L30}
\keywords{toric varieties, quotients of algebraic varieties, positive characteristics}
\begin{document}
\maketitle
\markboth{Tadakazu Sawada}{Quotients of smooth projective toric varieties by $\mu_p$ in 
positive characteristics $p$}

\begin{abstract}
In this paper we show that quotients of smooth projective toric varieties by $\mu_p$ in positive 
characteristics $p$ are toric varieties. 
\end{abstract}

\section*{Introduction}
We work over an algebraically closed field $k$ of positive characteristics $p$. A study of purely 
inseparable morphisms is one of a fundamental subject of algebraic geometry in positive characteristic. 
Let $X$ be a smooth variety over $k$. It is well known that if a morphism $f:X\rightarrow Y$ of 
varieties is purely inseparable of exponent one, then there exists a rational vector field 
$\delta \in H^0(X, T_X\otimes K(X))$, where $T_X$ is the tangent bundle of $X$ and $K(X)$ is the 
function field of $X$, such that $X/\delta \cong Y$. (See Rudakov and {\v{S}}afarevi{\v{c}} \cite{RS}.) 
It is also well known that if $X$ have a non-trivial $\mu_p$-action, then the quotient by $\mu_p$ 
exists and the quotient map $X\rightarrow X/\mu_p$ is purely inseparable of exponent one. (See 
Tziolas \cite{Tziolas}.) Tziolas shown the connection between actions of $\mu_p$ and global vector 
fields in \cite{Tziolas}. In this paper, we consider structures of $\mu_p$-quotients of smooth projective 
toric varieties by using the Tziolas' characterization. 

The organization of this paper is as follows. In section $1$, we fix notation. In section $2$, we review 
generalities on $p$-closed derivations quickly. In section $3$, we give explicit descriptions of global 
sections of the tangent bundle of smooth projective toric varieties. In section $4$, we consider 
structures of quotients of smooth projective toric varieties by $\mu_p$ in positive characteristics $p$. 

\section{Notation}
We follows the notation given in Hartshorne \cite{H} and Cox, Little and Schenck \cite{CLS}. Here we 
pick up some notations for toric varieties. 

\begin{tabular}{ll}
$N$ & lattice $\ZZ^n$\\
$N_{\R}$ & vector space $N\otimes_{\ZZ}\R$ \\
$M$ & dual lattice of $N$, equals $\Hom_{\ZZ}(N,\ZZ)$ \\
$M_{\R}$ & vector space $M\otimes_{\ZZ}\R$ \\
$\sigma$ & rational convex polyhedral cone in $N_{\R}$\\
$\Sigma$ & fan in $N_{\R}$ \\
$\Sigma (1)$ & rays of a fan $\Sigma$ in $N_{\R}$ \\
$\rho$ & ray in $N_{\R}$ \\
$X_{\Sigma}$ & toric variety of a fan $\Sigma$ in $N_{\R}$ \\
$K(X)$ & function field of a variety $X$ \\
$T_X$ & tangent bundle of a smooth variety $X$ \\
$\Cl(X)$ & divisor class group of a variety $X$ \\
$D_{\rho}$ & torus-invariant prime divisor on $X_{\Sigma}$ of a ray $\rho \in \Sigma (1)$ \\
$[D]$ & divisor class of a divisor $D$ \\
in $\Cl (X)$\\
$\FF_p$ & finite field $\ZZ/p\ZZ$\\
$\Der_k\, R$ & $k$-linear derivations on a $k$-algebra $R$ \\
$k\langle S\rangle$ & $k$-vector space spanned by a set $S$ 
\end{tabular}

\section{Generalities on $p$-closed derivations}
Let $R$ be a $k$-algebra and $D \in \Der_k\, R$. We define the constant ring of $D$, denoted $R^{D}$, 
by $R^{D}:=\{x\in R|D (x)=0\}$. 

Let $K$ be a field over $k$ and $D \in \Der_k\, K$. Since $x^p \in K^{D}$ for all $x\in K$, the field extension 
$K/K^{D}$ is algebraic of purely inseparable of exponent one. It is known that the minimal polynomial 
$\mu_{D}\in K^{D}[t]$ of the $K^{D}$-linear map $D : K\rightarrow K$ is expressed as 
$\mu_{D}=t^{p^e}+c_{e-1}t^{p^{e-1}}+\cdots +c_{1}t^p+c_0 t$ for some $c_{e-1},\ldots ,c_0 \in K^{D}$ and 
that $\deg \mu_{D} =[K:K^D]$. (See Aramova and Avramov \cite{AA} for more details.) If $D^p=\alpha D$ 
for some $\alpha \in K^{D}$, where $D^p$ is the $p$-times composition as a derivation, we say $D$ is 
$p$-closed. Suppose that $D$ is $p$-closed. Then $D^p-\alpha D=0$ for some $\alpha\in K^{D}$. Hence 
$\deg \mu_{D}=p$, so that the field extension $K/K^{D}$ is algebraic of purely inseparable of degree $p$. 

Let $X$ be a smooth variety over $k$ and $\delta \in H^0(X, T_X\otimes K(X))=\Der_k\, K(X)$ be a rational 
vector field. Let $\{U_i=\Spec R_i\}_i$ be an affine covering of $X$. The quotient $X/\delta$ is described by 
glueing $\{U_i/\delta=\Spec R_i^{\delta} \}_i$ and the quotient map $\pi:X\rightarrow X/\delta$ is induced by 
inclusions $R_i^{\delta}\subset R_i$. If $\delta$ is $p$-closed, the quotient map $\pi:X\rightarrow X/\delta$ is 
purely inseparable of degree $p$. 

\section{Global sections of a smooth projective toric variety}
Let $X_{\Sigma}$ be an $n$-dimensional smooth projective toric variety. Let $S=k[x_{\rho}|\rho \in \Sigma (1)]$ 
be the total coordinate ring of $X_{\Sigma}$. We define degree $0$ homomorphism 
$\alpha : \Cl (X_{\Sigma})\spcheck \otimes_{\ZZ} S \rightarrow \bigoplus_{\rho \in \Sigma (1) } S( [D_{\rho}] )$ 
of graded $S$-modules by $\varphi \otimes 1 \mapsto \sum_{\rho \in \Sigma (1) } \varphi ([D_{\rho}]) x_{\rho} e_{\rho}$, 
where $\Cl (X_{\Sigma})\spcheck=\Hom_{\ZZ}(\Cl (X_{\Sigma}),\ZZ)$ and $e_{\rho}$ are basis of 
$\bigoplus_{\rho \in \Sigma (1) } S( [D_{\rho}] )$ in degree $-[D_{\rho}]$. Then we have the exact sequence 
$$0\rightarrow \Cl (X_{\Sigma})\spcheck \otimes_{\ZZ} S 
\xrightarrow{\alpha} \bigoplus_{\rho \in \Sigma (1) } S( [D_{\rho}] )\rightarrow \Coker\, \alpha \rightarrow 0.$$
This give rise to the Euler sequence 
$$0\rightarrow \Cl (X_{\Sigma})\spcheck\otimes_{\ZZ} {\mathcal{O}}_{X_{\Sigma}} 
\rightarrow \bigoplus_{\rho \in \Sigma (1)} {\mathcal{O}}_{X_{\Sigma}}([D_{\rho}] )\rightarrow T_{X_{\Sigma}}\rightarrow 0$$
and there exists an isomorphism $\beta: (\Coker\, \alpha)\ \widetilde{} \rightarrow T_{X_{\Sigma}}$, where 
$(\Coker\,\alpha)\ \widetilde{}\ $ is the sheaf associated to $\Coker\,\alpha$. We denote by $V_{\rho}$ the 
$k$-vector space $H^0(X_{\Sigma}, \mathcal{O}_{X_{\Sigma}}([D_{\rho}]))=(S([D_{\rho}]))_0
=k\langle \prod x_{\rho}| [D_{\rho}]=[\mathrm{div}\,(\prod x_{\rho})]\rangle$ for $\rho \in \Sigma(1)$. 
We have isomorphisms of $k$-vector spaces 
\begin{eqnarray*}
H^0(X_{\Sigma}, T_{X_{\Sigma}})
&\cong & \left(\bigoplus_{\rho \in \Sigma (1)} V_{\rho}e_{\rho}\right)
\Big/k\left\langle  \sum_{\rho \in \Sigma (1)} \varphi ([D_{\rho}]) x_{\rho} e_{\rho} \Bigg| \varphi \in \Cl(X_{\Sigma})\spcheck\right\rangle \\
&\cong & \left(\bigoplus_{\rho \in \Sigma (1)} V_{\rho}\dfrac{\partial}{\partial x_{\rho}}\right)
\Big/k\left\langle  \sum_{\rho \in \Sigma (1)} \varphi ([D_{\rho}]) x_{\rho} \dfrac{\partial}{\partial x_{\rho}} \Bigg| \varphi \in \Cl(X_{\Sigma})\spcheck\right\rangle,
\end{eqnarray*}
where the first isomorphism is induced by $\beta$ and the second isomorphism is defined by sending 
$\overline{e_{\rho}}$ to $\overline{{\partial}/{\partial x_{\rho}}}$. 

Let $\sigma \in \Sigma$ be an $n$-dimensional cone and 
$y={(\prod x_{\alpha}^{a_{\alpha}})}/{(\prod x_{\beta}^{b_{\beta}})}\in k[\sigma\spcheck \cap M]$, where 
$\prod x_{\alpha}$ and $\prod x_{\beta}$ have no common factors, be one of the generators of 
$k[\sigma\spcheck \cap M]$ as a $k$-algebra. Let $\rho \in \Sigma (1)$ and $\prod x_{\xi}$ be an element 
of the total coordinate ring $S$ such that $[\mathrm{div}\, (\prod x_{\xi})]=[D_{\rho}]$. By considering 
actions to $y$, we think of $x_{\rho}{\partial}/{\partial x_{\rho}}$ and $(\prod x_{\xi})\,{\partial}/{\partial x_{\rho}}$ 
as derivations on $k[\sigma\spcheck \cap M]$: 
\begin{itemize}
\item If $x_{\rho}$ appears in $\prod x_{\alpha}$ (resp. $\prod x_{\beta}$), then 
$x_{\rho}\dfrac{\partial}{\partial x_{\rho}}y=a_{\rho} y$ (resp. $-b_{\rho} y$), so that 
$x_{\rho} {\partial}/{\partial x_{\rho}}= a_{\rho} y {\partial}/{\partial y}$ 
$\left({\rm resp.} -b_{\rho} y{\partial}/{\partial y}\right)$ on $k[\sigma\spcheck \cap M]$. 
\item Suppose that ${x_{\rho}}/{(\prod x_{\xi})}\not=y$. 
If $x_{\rho}$ appears in $\prod x_{\alpha}$ (resp. $\prod x_{\beta}$), then 
$(\prod x_{\xi})\dfrac{\partial}{\partial x_{\rho}}y=a_{\rho} \dfrac{\prod x_{\xi}}{x_{\rho}}y$ 
$\left({\rm resp.} -b_{\rho} \dfrac{\prod x_{\xi}}{x_{\rho}}y\right)$, so that 
$(\prod x_{\xi})\dfrac{\partial}{\partial x_{\rho}}=a_{\rho}\dfrac{\prod x_{\xi}}{x_{\rho}} y\dfrac{\partial}{\partial y}$ 
$\left({\rm resp.} -b_{\rho}\dfrac{\prod x_{\xi}}{x_{\rho}} y\dfrac{\partial}{\partial y}\right)$ on $k[\sigma\spcheck \cap M]$. 
\item If ${x_{\rho}}/{(\prod x_{\xi})}=y$, then $\prod x_{\xi}\dfrac{\partial}{\partial x_{\rho}}y=1$, so that 
$(\prod x_{\xi})\, {\partial}/{\partial x_{\rho}}={\partial}/{\partial y}$ on $k[\sigma\spcheck \cap M]$. 
\end{itemize}

Set  
$$V:=\left(\bigoplus_{\rho \in \Sigma (1)} V_{\rho}e_{\rho}\right)
\Big/k\left\langle  \sum_{\rho \in \Sigma (1)} \varphi ([D_{\rho}]) x_{\rho} e_{\rho} \Bigg| \varphi \in \Cl(X_{\Sigma})\spcheck\right\rangle$$
and 
$$\Der_k\, \mathcal{O}_{X_{\Sigma}}:=\left(\bigoplus_{\rho \in \Sigma (1)} V_{\rho}\dfrac{\partial}{\partial x_{\rho}}\right)
\Big/k\left\langle  \sum_{\rho \in \Sigma (1)} \varphi ([D_{\rho}]) x_{\rho} \dfrac{\partial}{\partial x_{\rho}} \Bigg| \varphi \in \Cl(X_{\Sigma})\spcheck\right\rangle.$$
We define a {\it restriction map} $\varphi_U:\Der_k\, \mathcal{O}_{X_{\Sigma}} \rightarrow H^0(U,T_X)$ 
for an open set $U\subset U_{\sigma}$ as follows: 
\begin{itemize}
\item If $x_{\rho}$ appears in $\prod x_{\alpha}$ (resp. $\prod x_{\beta}$), then 
$x_{\rho}{\partial}/{\partial x_{\rho}} \mapsto a_{\rho} y{\partial}/{\partial y}$ 
$\left({\rm resp.} -b_{\rho} y{\partial}/{\partial y}\right)$. 
\item Suppose that ${x_{\rho}}/{(\prod x_{\xi})}\not=y$. If $x_{\rho}$ appears in $\prod x_{\alpha}$ 
(resp. $\prod x_{\beta}$), then 
$(\prod x_{\xi})\dfrac{\partial}{\partial x_{\rho}}\mapsto a_{\rho}\dfrac{\prod x_{\xi}}{x_{\rho}} y\dfrac{\partial}{\partial y}$ 
$\left({\rm resp.} -b_{\rho}\dfrac{\prod x_{\xi}}{x_{\rho}} y\dfrac{\partial}{\partial y}\right)$. 
\item If ${x_{\rho}}/{(\prod x_{\xi})}=y$, then 
$(\prod x_{\xi})\, {\partial}/{\partial x_{\rho}} \mapsto {\partial}/{\partial y}$. 
\end{itemize}
Then we have a commutative diagram 
$$\xymatrix{
H^0(X_{\Sigma},T_{X_{\Sigma}}) \ar[d]_{\rho_{X_{\Sigma}U}} 
& V \ar[l]_(.4){\beta(X_{\Sigma})} \ar[r] \ar[d]_{{\rho'}_{X_{\Sigma}U}} 
& \Der_k\, \mathcal{O}_{X_{\Sigma}} \ar@/^22mm/[lld]^{\varphi_U}\\
H^0(U,T_{X_{\Sigma}}) & H^0(U, (\Coker\,\alpha)\ \widetilde{}\ ) \ar[l]_(.56){\beta(U)} & 
}$$
where $\rho_{X_{\Sigma}U}$ (resp. ${\rho'}_{X_{\Sigma}U}$) is the restriction map of $T_{X_{\Sigma}}$ 
(resp. $(\Coker\,\alpha)\ \widetilde{}\ $). Therefore we have the following result. 

\begin{lem}\label{1}
For any open set $U\subset U_{\sigma}$, we have a commutative diagram
$$\xymatrix{
H^0(X_{\Sigma},T_{X_{\Sigma}}) \ar[r] \ar[d]_{\rho_{XU}} 
& \Der_k\, \mathcal{O}_{X_{\Sigma}} \ar[ld]^{\varphi_U}\\
H^0(U ,T_{X_{\Sigma}}) & 
}$$
\end{lem}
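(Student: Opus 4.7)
The plan is to obtain the triangle of Lemma \ref{1} as a formal consequence of the larger commutative diagram displayed immediately before the statement. The horizontal arrow $H^0(X_\Sigma, T_{X_\Sigma}) \to \Der_k\, \mathcal{O}_{X_\Sigma}$ appearing in the lemma is, by construction, the composition of $\beta(X_\Sigma)^{-1}$ with the canonical isomorphism $V \xrightarrow{\sim} \Der_k\, \mathcal{O}_{X_\Sigma}$ sending $\overline{e_\rho}\mapsto \overline{\partial/\partial x_\rho}$. Hence the triangle will commute once the left square and the right triangle of the larger diagram are shown to commute separately.

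I would first dispose of the left-hand square. Since $\beta$ is an isomorphism of \emph{sheaves} $(\Coker\,\alpha)\widetilde{\phantom{x}}\xrightarrow{\sim} T_{X_\Sigma}$, the identity $\rho_{X_\Sigma U}\circ \beta(X_\Sigma)=\beta(U)\circ \rho'_{X_\Sigma U}$ is simply the naturality of restriction maps with respect to a morphism of sheaves. No toric input is required here; it is purely formal.

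For the right-hand triangle, a representative of a class in $\Der_k\, \mathcal{O}_{X_\Sigma}$ has the form $\overline{x_\rho\,\partial/\partial x_\rho}$ or $\overline{(\prod x_\xi)\,\partial/\partial x_\rho}$ with $[\mathrm{div}\,(\prod x_\xi)]=[D_\rho]$. Pulled back to $V$ and then pushed along $\beta(U)\circ \rho'_{X_\Sigma U}$, the resulting element of $H^0(U, T_{X_\Sigma})$ is a $k$-derivation of $k[\sigma\spcheck\cap M]$, and such a derivation is determined by its values on the algebra generators $y=(\prod x_\alpha^{a_\alpha})/(\prod x_\beta^{b_\beta})$. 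The three bullet-point computations carried out right after the Euler sequence evaluate $x_\rho\,\partial/\partial x_\rho$ and $(\prod x_\xi)\,\partial/\partial x_\rho$ on such a $y$, and those evaluations are literally the recipes defining $\varphi_U$. Thus the two maps agree on generators.

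The point requiring real care, and where I expect the main obstacle to lie, is that $\varphi_U$ is defined on representatives, so I must verify it descends to the quotient $\Der_k\, \mathcal{O}_{X_\Sigma}$. Applying the formulas to the relation $\sum_\rho \varphi([D_\rho])\,x_\rho\,\partial/\partial x_\rho$ for a given $\varphi\in \Cl(X_\Sigma)\spcheck$ and evaluating on $y$ yields $\sum_\rho \varphi([D_\rho])(a_\rho-b_\rho)\,y\,\partial/\partial y=\varphi([\mathrm{div}\,y])\,y\,\partial/\partial y$, which vanishes because $y\in k[\sigma\spcheck\cap M]$ realizes $\mathrm{div}\,y$ as a principal divisor on $X_\Sigma$ and hence $[\mathrm{div}\,y]=0$ in $\Cl(X_\Sigma)$. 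Once this compatibility is secured, the right-hand triangle commutes, and splicing it to the left-hand square gives the triangle of Lemma \ref{1}.
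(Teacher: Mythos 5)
Your argument is correct and takes essentially the same route as the paper: the lemma is read off from the larger commutative diagram constructed before the statement, with the left square commuting formally by naturality of restriction along the sheaf isomorphism $\beta$, and the right triangle checked on the algebra generators $y$ of $k[\sigma\spcheck\cap M]$ via the same bullet-point evaluations of $x_{\rho}\,\partial/\partial x_{\rho}$ and $(\prod x_{\xi})\,\partial/\partial x_{\rho}$ that define $\varphi_U$. Your explicit check that $\varphi_U$ descends to the quotient $\Der_k\,\mathcal{O}_{X_{\Sigma}}$, using $\sum_{\rho}\varphi([D_{\rho}])(a_{\rho}-b_{\rho})=\varphi([\mathrm{div}\,y])=0$ in $\Cl(X_{\Sigma})$, is carried out correctly and makes explicit a point the paper leaves implicit.
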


\begin{lem}\label{2}
Let $X_{\Sigma}$ be a smooth project toric variety and $S$ be the total coordinate ring of $X_{\Sigma}$. 
Let $\delta \in H^0(X_{\Sigma},T_{X_{\Sigma}})\ (\subset \Der_k K(X_{\Sigma}))$ and $\overline{D}$ be the 
corresponding element of $\Der_k\,\mathcal{O}_{X_{\Sigma}}$, where 
$D\in \bigoplus_{\rho \in \Sigma (1)} V_{\rho}\hspace{0.5mm} \partial/\partial x_{\rho}$. Then 
$X_{\Sigma}/\delta \cong \Proj S^D$. 
\end{lem}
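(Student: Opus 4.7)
The plan is to work chart by chart using the affine cover of $X_\Sigma$ by open sets $U_\sigma$ indexed by the $n$-dimensional cones $\sigma\in\Sigma$, to identify each quotient $U_\sigma/\delta$ with the corresponding affine chart of $\Proj S^D$ (taken in the Cox-theoretic multigraded sense, relative to the $\Cl(X_\Sigma)$-grading on $S^D$), and then to verify the gluings on overlaps.

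First, since each summand $f_\rho\,\partial/\partial x_\rho$ with $f_\rho\in V_\rho=S_{[D_\rho]}$ is grading-preserving on $S$, the derivation $D$ preserves the $\Cl(X_\Sigma)$-grading, so $S^D\subset S$ is a graded subring. For an $n$-dimensional cone $\sigma$ set $a:=x^{\hat\sigma}=\prod_{\rho\notin\sigma(1)}x_\rho$; by Cox's construction $U_\sigma=\Spec(S_a)_0=\Spec k[\sigma\spcheck\cap M]$. Extending $D$ to $S_a$ by the quotient rule preserves the grading and restricts to a derivation $\widetilde D$ on $(S_a)_0$. The bulleted calculations preceding Lemma \ref{1}, performed on the monomial generators $y$ of $k[\sigma\spcheck\cap M]$, show that $\widetilde D$ agrees with $\varphi_{U_\sigma}(\bar D)$ on generators; by Lemma \ref{1} we conclude $\widetilde D=\delta|_{U_\sigma}$.

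The technical heart of the argument, which I expect to be the main obstacle, is the identity $(S^D)_a=(S_a)^D$; this is where positive characteristic is used decisively. The inclusion $(S^D)_a\subset(S_a)^D$ is immediate from $a^p\in S^D$. For the converse, given $h=f/a^n\in(S_a)^D$, the condition $D(h)=0$ becomes $aD(f)=nfD(a)$ in $S$ (using that $a$ is a nonzerodivisor); choosing $m\geq 1$ with $m+n\equiv 0\pmod p$, the Leibniz rule yields $D(fa^m)=(n+m)a^{m-1}fD(a)=0$, hence $fa^m\in S^D$ and $h=fa^m/a^{n+m}\in(S^D)_a$.

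Since $D$ preserves grading, taking invariants commutes with passing to the degree-zero part, giving
\[
(k[\sigma\spcheck\cap M])^{\delta|_{U_\sigma}}=((S_a)_0)^{\widetilde D}=((S_a)^D)_0=((S^D)_a)_0,
\]
so $U_\sigma/\delta=\Spec((S^D)_a)_0$, the standard affine chart of $\Proj S^D$ cut out by the homogeneous element $a^p\in S^D$. The same argument applied to overlaps, with $a$ replaced by $x^{\hat\sigma}x^{\hat\tau}$, matches the transition data on $U_\sigma\cap U_\tau$, so the local isomorphisms glue to the desired global isomorphism $X_\Sigma/\delta\cong\Proj S^D$.
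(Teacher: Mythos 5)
Your proof is correct and takes essentially the same route as the paper: restrict to the charts $U_{\sigma}$ (equivalently, to homogeneous elements of $S^D$ such as $(x^{\hat\sigma})^p$), use Lemma \ref{1} to identify $\delta$ on each chart with the algebraic derivation induced by $D$, identify the invariant rings of the homogeneous localizations, and glue. The only difference is that you actually prove the key commutation $(S_{(F)})^D=(S^D)_{(F)}$ (via $a^p\in S^D$ and the choice of $m$ with $p\mid n+m$), a step the paper asserts without proof.
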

\begin{proof}
Let $F\in S$ be a homogeneous element of the irrelevant ideal of $S$ such that $F\in S^D$. We denote by 
$D^{X_{\Sigma}}_{+}(F)$ (resp. $D^{\Proj S^D}_{+}(F)$) the open set of $X_{\Sigma}$ (resp. $\Proj S^D$) defined 
by $F$. Let $\sigma \in \Sigma$ be an $n$-dimensional cone. Multiplying $F$ by 
$\left(\prod_{\rho \not\in \sigma} x_{\rho}\right)^p$, we may assume that $D^{X_{\Sigma}}_{+}(F)\subset U_{\sigma}$. 
Since $(S^D)_{(F)}=(S_{(F)})^D$, we have $D^{\Proj S^D}_{+}(F)\cong \Spec (S^D)_{(F)}\cong \Spec (S_{(F)})^D$. 
On the other hand, we have $D^{X_{\Sigma}}_{+}(F)/\delta \cong \Spec (S_{(F)})^{\delta}= \Spec (S_{(F)})^D$ 
by Lemma~\ref{1}. Since $X_{\Sigma}/\delta$ is constructed by glueing $D^{X_{\Sigma}}_{+}(F)/\delta$, we have 
$X_{\Sigma}/\delta \cong \Proj S^D$. 
\end{proof}

\begin{lem}\label{3}
With the same hypotheses as Lemma 2.2, suppose that $\delta^p=\delta$. Then there exists 
$D'=\sum_{\rho\in \Sigma (1)}\alpha_{\rho}x_{\rho} \partial/\partial x_{\rho}\in \bigoplus_{\rho \in \Sigma (1)} V_{\rho}\hspace{0.5mm} \partial/\partial x_{\rho}$, 
where $\alpha_{\rho} \in \FF_p$, such that $S^D\cong S^{D'}$ as graded rings. 
\end{lem}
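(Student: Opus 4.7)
The plan is to produce a $\Cl(X_{\Sigma})$-graded ring automorphism $\Phi : S \to S$ that conjugates $D$ into the desired diagonal form; then $\Phi$ will restrict to the required graded ring isomorphism $S^D \cong S^{D'}$.

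First, I would exploit the hypothesis $\delta^p = \delta$ to arrange $D^p = D$ as a derivation on all of $S$. Since $D^p$ is again a degree-preserving derivation of $S$ (the $p$-th power of a derivation is a derivation in characteristic $p$) and it represents $\delta^p = \delta$ in $\Der_k \mathcal{O}_{X_{\Sigma}}$, the difference $D^p - D$ lies in the Euler-relation subspace. A direct computation (using the generalized Euler formula, that an Euler derivation acts on a $\Cl(X_{\Sigma})$-homogeneous element by its degree) shows Euler relations commute with $D$, and each Euler relation $E = \sum_\rho \varphi([D_\rho]) x_\rho \partial/\partial x_\rho$ satisfies $E^p = E$ because $\varphi([D_\rho]) \in \FF_p \subset k$. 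Expanding $D^p - D = \sum_i c_i E_i$ in a basis $\{E_i\}$ of the Euler subspace and replacing $D$ by $D - \sum_i \psi_i E_i$ for scalars $\psi_i \in k$ solving the Artin--Schreier equations $\psi_i^p - \psi_i = c_i$ (solvable because $k$ is algebraically closed) yields a new lift of $\overline{D}$ satisfying $D^p = D$ on $S$; I relabel this lift as $D$.

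Second, with $D^p = D$, the operator $D$ annihilates $\prod_{\alpha \in \FF_p}(t - \alpha)$, so $S$ decomposes into $D$-eigenspaces $S = \bigoplus_{\alpha \in \FF_p} S^{(\alpha)}$ with $S^D = S^{(0)}$. Because $D$ preserves the $\Cl(X_{\Sigma})$-grading, each $V_\rho$ splits as $V_\rho = \bigoplus_{\alpha} V_\rho^{(\alpha)}$. Using the spectral projectors $\pi_\alpha = \prod_{\beta \neq \alpha}(D-\beta)/(\alpha-\beta)$, I would select for each ray $\rho$ an eigenvalue $\alpha_\rho \in \FF_p$ and an eigenvector $y_\rho \in V_\rho^{(\alpha_\rho)}$, and define $\Phi : S \to S$ by $x_\rho \mapsto y_\rho$. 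Once $\Phi$ is verified to be a $\Cl(X_{\Sigma})$-graded $k$-algebra automorphism, it conjugates $D$ into $D' = \sum_\rho \alpha_\rho x_\rho \partial/\partial x_\rho$ with $\alpha_\rho \in \FF_p$, and restricts to the sought graded ring isomorphism $S^D \cong S^{D'}$.

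The hard part will be the construction and verification in the previous step: the eigenvectors $\{y_\rho\}$ chosen from the $V_\rho^{(\alpha_\rho)}$ must assemble into an algebraically independent generating set of the polynomial ring $S$. This is delicate when several rays share a divisor class (then $\Phi$ must include a nontrivial linear mixing of the $x_\rho$'s within that class) or when some $V_\rho$ contains monomials of total degree $\geq 2$ (then $y_\rho$ will carry a higher-order tail). The strategy is first to diagonalize the ``linear'' part of $D$ (its action on the span of degree-one monomials sharing each class) by a linear change of variables, and then to eliminate the remaining higher-order off-diagonal terms by a triangular argument with respect to the monomial-degree filtration on each $V_\rho$, using the fact that the $\pi_{\alpha_\rho}$ are degree-preserving and the ``leading coefficient'' of $\pi_{\alpha_\rho}(x_\rho)$ is controlled.
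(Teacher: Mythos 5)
Your second and third steps are essentially the paper's own argument: once $D^p=D$ holds on $S$, the induced endomorphism of $\bigoplus_{\rho}V_{\rho}$ satisfies $t^p-t$, hence is diagonalizable with eigenvalues in $\FF_p$, and one diagonalizes $D$ by the graded substitution $x_{\rho}\mapsto y_{\rho}$. The paper dismisses the invertibility of that substitution with a brief closing remark, so your attention to the problem of assembling the $y_{\rho}$ into an actual coordinate system (diagonalize the linear part modulo the square of the irrelevant ideal, then correct higher-order terms by a filtration argument) is a reasonable, arguably more careful, way to carry out what the paper asserts.

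The genuine gap is in your first step. Replacing $D$ by $\tilde{D}=D-\sum_i\psi_iE_i$ does give another representative of the same class $\overline{D}=\delta$ with $\tilde{D}^p=\tilde{D}$, but it changes the ring of constants: $S^{\tilde{D}}$ need not be isomorphic to $S^{D}$ as a graded ring, so what you prove is $S^{\tilde{D}}\cong S^{D'}$, not the stated $S^{D}\cong S^{D'}$. Concretely, on $\PP^1$ with $S=k[x_0,x_1]$, take $D=x_0\,\partial/\partial x_0$ and $\tilde{D}=D+cE$ with $E=x_0\,\partial/\partial x_0+x_1\,\partial/\partial x_1$ the Euler derivation and $c\in k\setminus\FF_p$: then $S^{D}=k[x_0^p,x_1]$ while $S^{\tilde{D}}=k[x_0^p,x_1^p]$, and their Hilbert functions already differ in degree one. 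So the Euler shift is not a harmless normalization; the conclusion of the lemma is genuinely sensitive to which lift of $\delta$ one works with (indeed $k[x_0^p,x_1^p]$ is not the constant ring of any diagonal derivation with $\FF_p$-coefficients). The paper takes a different route precisely here: it argues by a field-theoretic degree count (from $[K(X_{\Sigma}):K(X_{\Sigma})^{\delta}]=p$ it deduces $[k(x_{\rho}):k(x_{\rho})^{D}]=p$) that the given $D$ is already $p$-closed, writes $D^p=\alpha D$ with $\alpha\in k$, and then only rescales $D$ by a nonzero scalar $\beta$ — an operation that leaves $S^{D}$ unchanged — to reach $D^p=D$. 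To close your gap you must either show that for the given lift no Euler correction is needed (i.e. argue, as the paper attempts, that $D^p$ is already a scalar multiple of $D$), or else explicitly weaken the target to $\Proj S^{D}\cong\Proj S^{D'}$, which is all the main theorem uses, since Lemma 2.2 applies to the modified lift $\tilde{D}$ just as well; as written, your proof silently substitutes a different object for the $S^{D}$ named in the statement.
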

\begin{proof}
In what follows we denote $k(x_{\rho}|\rho \in \Sigma (1))$ by $k(x_{\rho})$. There exist the following field extensions: 
$$\xymatrix{
 & k(x_{\rho}) \ar@{-}[d] \\
K(X_{\Sigma}) \ar@{-}[ru] \ar@{-}[d] & k(x_{\rho})^D  \\
K(X_{\Sigma})^D \ar@{-}[ru] &  
}$$
The field extension $k(x_{\rho})/k(x_{\rho})^D$ is algebraic of purely inseparable of exponent one. Since 
$K(X_{\Sigma})^D=K(X_{\Sigma})^{\delta}$ and $\delta$ is $p$-closed, the field extension $K(X_{\Sigma})/K(X_{\Sigma})^D$ 
is algebraic of purely inseparable of degree $p$. The dehomogenized elements of the generators of 
$K(X_{\Sigma})$ as a $K(X_{\Sigma})^D$-vector space generate $k(x_{\rho})$ as a $k(x_{\rho})^D$-vector space. Hence 
we have $[k(x_{\rho}):k(x_{\rho})^D]=p$. Since the degree of the minimal polynomial of the $k(x_{\rho})^D$-linear map 
$D:k(x_{\rho})\rightarrow k(x_{\rho})$ equals $p$, $D$ is $p$-closed. 

Let $D^p=\alpha D$, where $\alpha \in k(x_{\rho})^D$. Since 
$D\in \bigoplus_{\rho \in \Sigma (1)} V_{\rho}\hspace{0.5mm} \partial/\partial x_{\rho}$, we see that $\alpha \in k$. 
Let $\beta\in k$ be a root of the equation $\alpha x^p-x=0$. Since 
$(\beta D)^p=\beta^pD^p=\beta^p\alpha D=\beta D$, we may assume that $D^p=D$ by replacing $D$ by $\beta D$. 
$D$ induces a $k$-linear map $\psi : \bigoplus_{\rho \in \Sigma (1)} V_{\rho}\rightarrow \bigoplus_{\rho \in \Sigma (1)} V_{\rho}$. 
Since $D^p=D$, the minimal polynomial of $\psi\in k[t]$ divides $t^p-t=t(t-1)\cdots (t-(p-1))$. Hence $D$ is diagonalizable. 
Let $y_{\rho}$ be an eigenvector of $\psi$ with an eigenvalue $a_{\rho} \in \FF_p$ for $\rho \in \Sigma (1)$. By coordinate 
changes $x_{\rho} \mapsto y_{\rho}$ for $\rho \in \Sigma (1)$, we have 
$D=\sum_{\rho\in \Sigma (1)}a_{\rho}y_{\rho}\partial/\partial y_{\rho}\in \bigoplus_{\rho \in \Sigma (1)} V_{\rho}\hspace{0.5mm} \partial/\partial y_{\rho}$. 
Therefore the proof is completed. 

Note that $D$ is decomposed as $D=\bigoplus_{\rho\in \Sigma (1)} D_{\rho}$, where $D_{\rho}\in V_{\rho}\hspace{0.5mm} \partial/\partial x_{\rho}$, 
and $D_{\rho}$ defines $k$-linear map $V_{\rho}\rightarrow V_{\rho}$ for each $\rho \in \Sigma(1)$. Hence the coordinate 
changes $x_{\rho}\mapsto y_{\rho}$ induce the (well-defined) graded automorphism on $S$ and define an isomorphism on 
$X_{\Sigma}$. 
\end{proof}

\begin{lem}\label{4}
Let $X_{\Sigma}$ be a smooth projective toric variety and $S$ be the total coordinate ring of $X_{\Sigma}$. Let 
$D=\sum_{\rho\in \Sigma (1)}a_{\rho}x_{\rho} {\partial}/{\partial x_{\rho}}\in \bigoplus_{\rho \in \Sigma (1)} V_{\rho}\hspace{0.5mm} \partial/\partial x_{\rho}$, 
where $a_{\rho}\in \FF_p$. Then $\Proj S^D$ is a toric variety. 
\end{lem}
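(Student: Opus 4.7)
The plan is to identify $\Proj S^D$, via Lemma~\ref{2}, with $X_\Sigma/\delta$, and then to recognize the latter as a toric variety by computing its affine charts directly. The guiding intuition is that since $D=\sum_\rho a_\rho x_\rho\partial/\partial x_\rho$ is diagonal with $\FF_p$-coefficients, the infinitesimal group scheme generated by $\delta$ is a $\mu_p$-subgroup of the big torus $T_N$, so its quotient stays toric.

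The first step is to compute the action of $D$ on characters of $T_N$ on each affine chart $U_\sigma=\Spec k[\sigma\spcheck\cap M]$ for $\sigma\in\Sigma$ a maximal cone. For $m\in\sigma\spcheck\cap M$ the monomial $\chi^m$ has Cox presentation $\prod_\rho x_\rho^{\langle m,u_\rho\rangle}$, where $u_\rho$ is the primitive generator of $\rho$, so applying the restriction map $\varphi_{U_\sigma}$ of Section~3 case by case gives
\[
D(\chi^m)=\Bigl(\sum_{\rho\in\Sigma(1)} a_\rho\langle m,u_\rho\rangle\Bigr)\chi^m=\phi(m)\,\chi^m,
\]
where $\phi\colon M\to\FF_p$ is the group homomorphism $m\mapsto\sum_\rho a_\rho\langle m,u_\rho\rangle \bmod p$. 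In particular $D$ acts diagonally on the monomial basis of $k[\sigma\spcheck\cap M]$, with all eigenvalues in $\FF_p$.

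Setting $M':=\Ker\phi\subseteq M$ (of index $1$ or $p$) and $N':=\Hom_{\ZZ}(M',\ZZ)\supseteq N$, the diagonal action immediately yields
\[
(k[\sigma\spcheck\cap M])^D=k\langle\chi^m : m\in\sigma\spcheck\cap M,\ \phi(m)=0\rangle=k[\sigma\spcheck\cap M'],
\]
which is precisely the coordinate ring of the affine toric variety attached to $\sigma$ in the lattice $N'$. As $\sigma$ ranges over the maximal cones of $\Sigma$ these invariant charts glue along the expected face relations: a separating character $\chi^m$ for a common face $\tau=\sigma\cap\sigma'$ may be replaced by the invariant character $\chi^{pm}\in M'$ (since $\phi(pm)=p\phi(m)=0$), which cuts out the same principal open, so the gluing descends. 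Consequently the glued scheme is the toric variety $X_{\Sigma'}$, where $\Sigma'$ is the fan consisting of the cones of $\Sigma$ reinterpreted in the larger lattice $N'$.

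Combining these ingredients with Lemma~\ref{2}, one concludes $\Proj S^D\cong X_\Sigma/\delta\cong X_{\Sigma'}$, a toric variety. The main obstacle is the first computation: translating the formal derivation $D\in\Der_k\mathcal{O}_{X_\Sigma}$ into the honest derivation on each $k[\sigma\spcheck\cap M]$ through the three-case description of the restriction map $\varphi_{U_\sigma}$, and verifying that those cases combine into the single clean formula $D(\chi^m)=\phi(m)\chi^m$. Once this is in place, the identification of the quotient as $X_{\Sigma'}$ and the check that the toric gluing descends to the invariants are both routine bookkeeping with affine toric charts.
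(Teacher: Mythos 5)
Your proof is correct and takes essentially the same route as the paper: both identify $\Proj S^D$ with $X_{\Sigma}/\delta$ via Lemma~\ref{2} and then check chart-by-chart that $D$ acts diagonally on the characters of $k[\sigma\spcheck\cap M]$, so that the invariants form the semigroup algebra of the same cone in a finer lattice and the quotient is the toric variety of $\Sigma$ in an overlattice of $N$. Your global character $\phi$ with $M'=\Ker\phi$ is just the dual formulation of the paper's per-chart overlattice $N'=N+\ZZ\frac{1}{p}(\alpha_1,\ldots,\alpha_n)$, with the minor benefit of making the chart-independence of the gluing explicit.
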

\begin{proof}
Let $\sigma \in \Sigma$ be an $n$-dimensional cone and $\rho_1,\ldots ,\rho_n \in \Sigma (1)$ be the rays which span $\sigma$. 
In what follows we consider a coordinate $n$-tuple of an element of $N$ with respect to $\ZZ$-basis $\rho_1,\ldots, \rho_n$. Let 
$U_{\sigma}=\Spec k[z_1,\ldots, z_n]$, where $z_i$ corresponds to $x_{\rho_i}$ respectively. Suppose that $\overline{D}$ is locally 
expressed as $\phi_{U_{\sigma}}(\overline{D})=\sum \alpha_i z_i{\partial}/{\partial z_i}$ with $\alpha_i \in \FF_p$. Then we see that 
$X_{\Sigma}/\overline{D}$ is the toric variety whose corresponding fan is $\Sigma$ in $N'\otimes \R$, where 
$N'=N+\ZZ\dfrac{1}{p}(\alpha_1,\ldots ,\alpha_n)$. ($N'$ is an overlattice of $N$.) Since $X_{\Sigma}/\overline{D}\cong \Proj S^D$, 
$\Proj S^D$ is a toric variety. 
\end{proof}

Let $X$ be a variety over $k$ and $\mu_p=\Spec k[x]/(x^p-1)$. Suppose that $X$ has a $\mu_p$-action. Tziolas shown that there 
exists a global vector field $\delta\in H^0(X,T_X)$ such that $\delta^p=\delta$ and $X/\mu_p \cong X/\delta$. (See Tziolas~\cite{Tziolas}.)

\begin{thm}
Let $X_{\Sigma}$ be a smooth projective toric variety and have a $\mu_p$-action. Then the quotient $X_{\Sigma}/\mu_p$ is a toric variety. 
\end{thm}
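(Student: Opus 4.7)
The plan is to chain the lemmas of Section~3 with the theorem of Tziolas, so that the proof becomes essentially an assembly. First I would apply Tziolas~\cite{Tziolas} to the $\mu_p$-action on $X_{\Sigma}$ to produce a global vector field $\delta \in H^0(X_{\Sigma}, T_{X_{\Sigma}})$ satisfying $\delta^p = \delta$ and $X_{\Sigma}/\mu_p \cong X_{\Sigma}/\delta$. This reduces the question from one about group-scheme quotients to one about quotients by a $p$-closed global vector field on a smooth projective toric variety.

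Next, using the isomorphism $H^0(X_{\Sigma}, T_{X_{\Sigma}}) \cong \Der_k\, \mathcal{O}_{X_{\Sigma}}$ recorded in Section~3, I would choose a representative $D \in \bigoplus_{\rho \in \Sigma(1)} V_{\rho}\, \partial/\partial x_{\rho}$ whose class $\overline{D}$ corresponds to $\delta$. Lemma~\ref{2} then gives $X_{\Sigma}/\delta \cong \Proj S^D$, transporting the problem entirely to the total coordinate ring $S$.

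Since $\delta^p = \delta$, I would invoke Lemma~\ref{3}: after rescaling $D$ by a scalar $\beta \in k$ with $\alpha \beta^p = \beta$ and performing the graded coordinate change $x_{\rho} \mapsto y_{\rho}$ on $S$ diagonalizing the induced $k$-linear map on $\bigoplus_{\rho} V_{\rho}$, we obtain $S^D \cong S^{D'}$ as graded rings, where $D' = \sum_{\rho \in \Sigma(1)} a_{\rho} x_{\rho}\, \partial/\partial x_{\rho}$ with $a_{\rho} \in \FF_p$. Because the coordinate change is a graded automorphism of $S$, it descends to an automorphism of $X_{\Sigma}$ and therefore does not affect the isomorphism class of the quotient.

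Finally, Lemma~\ref{4} applies to $D'$ and identifies $\Proj S^{D'}$ as a toric variety, with defining fan $\Sigma$ realized in the overlattice $N' = N + \ZZ \cdot \tfrac{1}{p}(\alpha_1,\ldots,\alpha_n)$ locally on each maximal cone. Stringing the isomorphisms $X_{\Sigma}/\mu_p \cong X_{\Sigma}/\delta \cong \Proj S^D \cong \Proj S^{D'}$ yields the conclusion. There is no real obstacle in this proof beyond assembling the ingredients in the correct order; the only point requiring attention is that the coordinate change of Lemma~\ref{3} must preserve the Cox-ring grading, which is exactly the content of the final paragraph of that lemma.
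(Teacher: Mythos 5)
Your proposal is correct and follows essentially the same route as the paper's own proof: invoke Tziolas to obtain $\delta$ with $\delta^p=\delta$ and $X_{\Sigma}/\mu_p\cong X_{\Sigma}/\delta$, pass to a representative $D$ and apply Lemma~\ref{2}, normalize via Lemma~\ref{3}, and conclude with Lemma~\ref{4} through the chain $X_{\Sigma}/\mu_p\cong X_{\Sigma}/\delta\cong \Proj S^D\cong \Proj S^{D'}$. No gaps; the remark about the coordinate change respecting the Cox-ring grading is indeed the one point the paper also flags, inside the proof of Lemma~\ref{3}.
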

\begin{proof}
There exists a global vector field $\delta\in H^0(X_{\Sigma}, T_{X_{\Sigma}})$ such that $\delta^p=\delta$ and $X/\mu_p\cong X/\delta$. 
Let $S=k[x_{\rho}|\rho \in \Sigma (1)]$ be the total coordinate ring of $X_{\Sigma}$. Let 
$D\in \bigoplus_{\rho \in \Sigma (1)} V_{\rho}\hspace{0.5mm} \partial/\partial x_{\rho}$ be a derivation such that $\delta=\overline{D}$. 
By Lemma~\ref{3} there exists $D'=\sum_{\rho}a_{\rho}x_{\rho}\hspace{0.5mm}\partial/\partial x_{\rho}
\in \bigoplus_{\rho \in \Sigma (1)} V_{\rho}\hspace{0.5mm} \partial/\partial x_{\rho}$, where $a_{\rho}\in \FF_p$, such that 
$S^{D}\cong S^{D'}$ as graded rings. We have $X/\mu_p\cong X/\delta\cong \Proj S^D\cong \Proj S^{D'}$ by Lemma~\ref{2}. 
Therefore $X/\mu_p$ is a toric variety by Lemma~\ref{4}. 
\end{proof}

\providecommand{\bysame}{\leavevmode\hbox to3em{\hrulefill}\thinspace}
\providecommand{\MR}{\relax\ifhmode\unskip\space\fi MR }
\providecommand{\MRhref}[2]{%
  \href{http://www.ams.org/mathscinet-getitem?mr=#1}{#2}
}
\providecommand{\href}[2]{#2}

\end{document}